\newtheorem{thm}{Theorem}[section]
\newtheorem{pro}[thm]{Proposition}
\newtheorem{co}[thm]{Corollary}
\newtheorem{defn}[thm]{Definition}
\newcommand{\Gtwo}{\ifmmode{{\rm G}_2}\else{${\rm G}_2$}\fi}
\newcommand{\g}{\mathfrak{g}}
\newcommand{\F}{\mathcal{F}}
\newcommand{\R}{\mathbb{R}}
\newcommand{\ee}{\end{equation}}
\newcommand{\be}[1]{\begin{equation}\label{#1}}
\begin{document}

\title[]
 {Radical screen transversal half lightlike submanifolds of almost contact B-metric manifolds}


\author[G. Nakova]{Galia Nakova}

\address{
St. Cyril and St. Methodius University of Veliko Tarnovo, Bulgaria}
\email{gnakova@gmail.com}

\subjclass{53B25, 53C50, 53B50, 53C42, 53C15}  

\keywords{Almost contact B-metric manifolds, Half lightlike submanifolds}


\begin{abstract}
We introduce a class of half lightlike submanifolds of almost contact B-metric manifolds and prove that such submanifolds are semi-Riemannian  with respect to the associated B-metric.  
Object of investigations are also minimal of the considered submanifolds and a non-trivial example for them is given. 
\end{abstract}

\maketitle

\section{Introduction}\label{sec-1}
The general theory of lightlike submanifolds of semi-Riemannian manifolds has been developed by Duggal, Bejancu, Sahin in \cite{D-B, D-S}. Half lightlike submanifolds of indefinite almost contact metric manifolds have been studied by D. H. Jin in \cite{J, J2}, where different types of such submanifolds according to the behaviour of the almost contact structure were examined. Half lightlike submanifolds of almost contact B-metric manifolds have not been studied yet, as far as we know.
\par
On almost contact B-metric manifolds there exist two B-metrics $\overline g$ and its associated metric 
$\overline {\widetilde g}$. Therefore we can consider two induced metrics $g$ and $\widetilde g$ on their submanifolds by $\overline g$ and $\overline {\widetilde g}$, respectively. In this paper we define
and characterize geometrically a class of half lightlike submanifolds of almost contact B-metric manifolds, called radical screen transversal half lightlike submanifolds. A distinguishing feature of a subclass of the introduced submanifolds  is that with respect to the induced metric of the associated metric they are semi-Riemannian of codimension two. This result is presented in Theorem \ref{Theorem 1.1}:
\begin{thm}\label{Theorem 1.1}
Let $(\overline M,\overline \varphi,\overline \xi,\overline \eta,\overline g,\overline {\widetilde g})$ be a $(2n+1)$-dimensional almost contact B-metric manifold and $(M,g,S(TM),{\rm Rad} (TM))$ be an ascreen radical screen transversal half lightlike submanifold of 
$(\overline M,\overline g)$. Then 
\begin{enumerate}
\item[(i)] $(M ,\widetilde g)$ is a semi-Riemannian submanifold of 
$(\overline M,\overline {\widetilde g})$ of codimension two and the vector fields $N_1=\overline \xi -L$, $N_2=2\overline \xi-2\mu N-L$ form an orthonormal basis with respect to $\overline {\widetilde g}$ of the normal bundle 
$TM^{\widetilde \bot }$ of $(M ,\widetilde g)$ such that
\begin{equation}\label{1.1}
\overline {\widetilde g}(N_1,N_1)=-\overline {\widetilde g}(N_2,N_2)=1, \quad \overline {\widetilde g}(N_1,N_2)=0 .
\end{equation}
\item[(ii)] The tangent bundle $TM$ of $(M,\widetilde g)$ is an orthogonal direct sum
with respect to $\widetilde g$ of the distributions $S(TM)$ and ${\rm Rad} (TM)$. Moreover, both $S(TM)$ and ${\rm Rad} (TM)$ are non-degenerate with respect to $\widetilde g$, ${\rm Rad} (TM)$ is spacelike and the signature of $\widetilde g$ on $S(TM)$ is $(n-1,n-1)$.
\end{enumerate}
\end{thm}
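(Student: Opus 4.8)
The statement is, at bottom, linear algebra over the ring of smooth functions; the one structural input is that the associated B-metric is $\overline{\widetilde g}(X,Y)=\overline g(\overline\varphi X,Y)+\overline\eta(X)\overline\eta(Y)$, together with the algebraic identities of an almost contact B-metric structure: $\overline\varphi$ is self-adjoint for $\overline g$, $\overline\varphi^{2}=-\id+\overline\eta\otimes\overline\xi$, $\overline\eta(\cdot)=\overline g(\cdot,\overline\xi)$ and $\overline g(\overline\varphi X,\overline\varphi Y)=-\overline g(X,Y)+\overline\eta(X)\overline\eta(Y)$. I would begin (Step~1) by fixing the adapted frame $\{E,N,L\}$ with $\operatorname{Rad}(TM)=\langle E\rangle$, $\operatorname{ltr}(TM)=\langle N\rangle$, $S(TM^{\bot})=\langle L\rangle$ and the usual normalisations $\overline g(E,E)=\overline g(N,N)=0$, $\overline g(E,N)=1$, $\overline g(L,L)=1$, $L\perp_{\overline g}\bigl(TM\oplus\operatorname{ltr}(TM)\bigr)$, and by turning the two defining conditions into explicit frame identities. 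The condition ``radical screen transversal'' gives $\overline\varphi E=\lambda L$; feeding this into $\overline g(\overline\varphi E,\overline\varphi E)=\overline\eta(E)^{2}$ forces $\lambda^{2}=\overline\eta(E)^{2}$, so $S(TM^{\bot})$ is indeed spacelike and $\lambda=\pm\overline\eta(E)$. The condition ``ascreen'' places $\overline\xi$ in $\operatorname{Rad}(TM)\oplus\operatorname{ltr}(TM)$, so $\overline\xi=aE+bN$ with $2ab=1$ (from $\overline g(\overline\xi,\overline\xi)=1$), whence $\overline\eta(E)=b$, $\overline\eta(N)=a$, $\overline\eta(L)=0$ and $\overline\eta$ vanishes on $S(TM)$. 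Applying $\overline\varphi$ to $\overline\xi=aE+bN$ with $\overline\varphi\overline\xi=0$ yields $\overline\varphi N=-\tfrac{a}{b}\,\overline\varphi E$, and $\overline\varphi^{2}E=-E+\overline\eta(E)\overline\xi$ then gives $\overline\varphi L$ as an explicit combination of $E$ and $N$. Rewriting the vectors of the theorem in this frame (the scalar $\mu$ being $\overline\eta(E)=b$) one obtains $N_{1}=aE+bN-L$, $N_{2}=2aE-L$, with $\overline\eta(N_{1})=\overline\eta(N_{2})=2ab=1$.

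Part~(i) is then a computation. In $\overline{\widetilde g}(N_{p},N_{q})=\overline g(\overline\varphi N_{p},N_{q})+\overline\eta(N_{p})\overline\eta(N_{q})$ the rank-one $\overline\eta$-term contributes $+1$ to each of the four entries, while the Step~1 identities give $\overline g(\overline\varphi N_{1},N_{1})=0$, $\overline g(\overline\varphi N_{1},N_{2})=\overline g(\overline\varphi N_{2},N_{1})=-1$, $\overline g(\overline\varphi N_{2},N_{2})=-2$ (the only surviving pairing being that of the $\operatorname{ltr}$-component of $\overline\varphi N_{p}$ against $E$); summing yields $\overline{\widetilde g}(N_{1},N_{1})=-\overline{\widetilde g}(N_{2},N_{2})=1$, $\overline{\widetilde g}(N_{1},N_{2})=0$, i.e.\ \eqref{1.1}. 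Next, for any $X\in TM$ one has $\overline{\widetilde g}(N_{p},X)=\overline g(\overline\varphi N_{p},X)+\overline\eta(N_{p})\overline\eta(X)=0$: writing $X\in S(TM)$ or $X=E$ and using that $\operatorname{Rad}(TM)$ and $S(TM^{\bot})$ are $\overline g$-orthogonal to $TM$, the only potentially nonzero contribution is the pairing of the $\operatorname{ltr}$-component of $\overline\varphi N_{p}$ with $E$, and this is cancelled exactly by $\overline\eta(N_{p})\overline\eta(E)$. Hence $\mathcal N:=\operatorname{span}\{N_{1},N_{2}\}$ is a rank-two subbundle of $T\overline M|_{M}$, $\overline{\widetilde g}$-orthogonal to $TM$ and $\overline{\widetilde g}$-non-degenerate of signature $(1,1)$ by \eqref{1.1}. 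Since $\overline{\widetilde g}$ is non-degenerate on $T\overline M$, $\operatorname{rank}TM+\operatorname{rank}\mathcal N=(2n-1)+2=2n+1$, and $TM\subseteq\mathcal N^{\perp_{\overline{\widetilde g}}}$ (which has rank $2n-1$), we get $TM=\mathcal N^{\perp_{\overline{\widetilde g}}}$; the $\overline{\widetilde g}$-orthogonal complement of a non-degenerate subbundle inside a non-degenerate bundle is non-degenerate, so $\widetilde g:=\overline{\widetilde g}|_{TM}$ is non-degenerate, $(M,\widetilde g)$ is semi-Riemannian of codimension two, and $TM^{\widetilde\bot}=\mathcal N$ with $\{N_{1},N_{2}\}$ an orthonormal basis satisfying \eqref{1.1}.

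For part~(ii) I would first establish $S(TM)\perp_{\widetilde g}\operatorname{Rad}(TM)$: for $X\in S(TM)$, using self-adjointness of $\overline\varphi$, $\overline\varphi E=\lambda L$, $\overline\eta|_{S(TM)}=0$ and $L\perp_{\overline g}TM$,
\[
\widetilde g(X,E)=\overline g(X,\overline\varphi E)+\overline\eta(X)\overline\eta(E)=\lambda\,\overline g(X,L)=0,
\]
and since $TM=S(TM)\oplus\operatorname{Rad}(TM)$ this is the asserted $\widetilde g$-orthogonal decomposition. Moreover $\widetilde g(E,E)=\overline g(\overline\varphi E,E)+\overline\eta(E)^{2}=\lambda\,\overline g(L,E)+\overline\eta(E)^{2}=\overline\eta(E)^{2}>0$, the positivity coming from the ascreen hypothesis (which makes $\overline\eta(E)$ nowhere zero); so $\operatorname{Rad}(TM)$ is a non-degenerate spacelike line, and then $S(TM)=\operatorname{Rad}(TM)^{\perp_{\widetilde g}}\cap TM$ is non-degenerate as well. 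Finally, the signature: as a B-metric for $(\overline\varphi,\overline\xi,\overline\eta)$, $\overline{\widetilde g}$ has signature $(n+1,n)$ ($\overline\xi$ is $\overline{\widetilde g}$-unit spacelike and $\overline\varphi$ restricts to a $\overline{\widetilde g}$-anti-isometry of $\ker\overline\eta$, which forces neutral signature there); removing the $(1,1)$ normal bundle $\mathcal N$ leaves signature $(n,n-1)$ on $(TM,\widetilde g)$, and removing the spacelike line $\operatorname{Rad}(TM)$ leaves $(n-1,n-1)$ on $S(TM)$.

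The delicate point is Step~1 together with the two cancellations in part~(i): the vanishing of $\overline{\widetilde g}(N_{p},\cdot)$ on $TM$ and the precise entries $\pm1$ of \eqref{1.1} rely on exact cancellation between the $\overline\varphi$-term and the $\overline\eta\otimes\overline\eta$-term, hence on pinning down the structural normalisations correctly (the sign of $\overline\varphi E$ relative to $L$, the identification of $\mu$, and the exact form of the ascreen condition). Everything downstream is the non-degeneracy/orthogonal-complement formalism and the elementary signature arithmetic used above.
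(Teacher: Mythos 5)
Your proof is correct and follows essentially the same route as the paper: you derive the same adapted-frame identities ($\overline \varphi \xi =\mu L$, $\overline \xi =(1/2\mu )\xi +\mu N$, $\overline \varphi N=-(1/2\mu )L$, $\overline \varphi L=-(1/2\mu )\xi +\mu N$), compute $\overline \varphi N_1$, $\overline \varphi N_2$ and $\overline \eta (N_1)=\overline \eta (N_2)=1$, and verify \eqref{1.1}, the normality of $N_1,N_2$, and the orthogonality and signature claims of (ii) by the same direct calculations. The only cosmetic difference is that you obtain non-degeneracy of $\widetilde g$ by a rank count on the $\overline {\widetilde g}$-orthogonal complement of ${\rm span}\{N_1,N_2\}$, whereas the paper argues by contradiction; the two arguments are equivalent.
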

We also initiate the simultaneous investigation of both submanifolds from Theorem \ref{Theorem 1.1}.
We start with studying of minimal of the considered submanifolds when the ambient manifold is an 
$\F_0$-manifold and we prove the following theorem:
\begin{thm}\label{Theorem 1.2}
Let $(\overline M,\overline \varphi,\overline \xi,\overline \eta,\overline g,\overline {\widetilde g})$ be a $(2n+1)$-dimensional $\F_0$-manifold and $(M,g)$ be an ascreen radical screen transversal half lightlike submanifold of 
$(\overline M,\overline g)$. Then the following assertions are equivalent:
\begin{enumerate}
[align=parleft]
\item[(i)] $(M,\widetilde g)$ is a minimal submanifold of $(\overline M,\overline {\widetilde g})$.
\item[(ii)] $(M,g)$ is a minimal submanifold of $(\overline M,\overline g)$.
\end{enumerate}
\end{thm}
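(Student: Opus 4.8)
The plan is to route everything through the single Levi--Civita connection that the two B-metrics share on an $\F_0$-manifold. On an $\F_0$-manifold the fundamental tensor $F$ vanishes, hence $\overline\varphi$, $\overline\xi$, $\overline\eta$ are all parallel for $\overline\nabla^{\overline g}$; since $\overline{\widetilde g}(X,Y)=\overline g(X,\overline\varphi Y)+\overline\eta(X)\overline\eta(Y)$, this forces $\overline\nabla^{\overline g}\overline{\widetilde g}=0$, so the Levi--Civita connection of $\overline{\widetilde g}$ equals that of $\overline g$; write $\overline\nabla$ for both. Using the Gauss formula of the lightlike submanifold, $\overline\nabla_XY=\nabla_XY+D_1(X,Y)N+D_2(X,Y)L$ (with $N$ the lightlike transversal, $L$ the screen transversal, $\varepsilon=\overline g(L,L)=\pm1$, $\overline g(N,L)=0$, and $\xi$ a local section of $\mathrm{Rad}(TM)$; recall $D_1$ symmetric and $D_1(X,\xi)=0$), I want to read off the second fundamental form $\widetilde{II}$ of the semi-Riemannian submanifold $(M,\widetilde g)\subset(\overline M,\overline{\widetilde g})$, which exists by \thmref{Theorem 1.1}.

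By \thmref{Theorem 1.1}(i) we have $T\overline M=TM\oplus TM^{\widetilde\bot}$ orthogonally for $\overline{\widetilde g}$ and $TM^{\widetilde\bot}=\Span{N_1,N_2}$. As $\overline\nabla$ is common to the Gauss decompositions of $(M,g)$ and of $(M,\widetilde g)$, subtracting the two formulas gives $D_1(X,Y)N+D_2(X,Y)L-\widetilde{II}(X,Y)\in TM$, so $\widetilde{II}(X,Y)$ is the $\overline{\widetilde g}$-orthogonal projection onto $TM^{\widetilde\bot}$ of $D_1(X,Y)N+D_2(X,Y)L$. Hence
\[
\widetilde{II}(X,Y)=D_1(X,Y)\,W_1+D_2(X,Y)\,W_2,\qquad W_1=\mathrm{pr}^{\widetilde\bot}(N),\quad W_2=\mathrm{pr}^{\widetilde\bot}(L).
\]
The vectors $W_1,W_2$ are linearly independent: the transversal bundle $\Span{N,L}=\ltr(TM)\oplus S(TM^\bot)$ is a complement of $TM$ in $T\overline M$, while $TM$ is the kernel of $\mathrm{pr}^{\widetilde\bot}$, so $\mathrm{pr}^{\widetilde\bot}|_{\Span{N,L}}$ is an isomorphism onto $TM^{\widetilde\bot}$ (equivalently, the matrix $(\overline{\widetilde g}(N,N_i),\overline{\widetilde g}(L,N_i))_i$ is invertible, which one can check from $N_1=\overline\xi-L$, $N_2=2\overline\xi-2\mu N-L$). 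Taking $\widetilde g$-traces over $TM$ we get $\widetilde H=(\tr_{\widetilde g}D_1)W_1+(\tr_{\widetilde g}D_2)W_2$, so $(M,\widetilde g)$ is minimal precisely when $\tr_{\widetilde g}D_1=0$ and $\tr_{\widetilde g}D_2=0$.

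It then remains to recognise these two conditions as minimality of $(M,g)$. By \thmref{Theorem 1.1}(ii), $TM=\mathrm{Rad}(TM)\oplus S(TM)$ orthogonally and non-degenerately for $\widetilde g$, so $\tr_{\widetilde g}D_i=\widetilde g(\xi,\xi)^{-1}D_i(\xi,\xi)+\tr_{\widetilde g}^{S(TM)}D_i$; the $\mathrm{Rad}$-term drops out because $D_1(\xi,\xi)=0$ always and because $D_2(X,\xi)=0$ here — differentiating $\overline\varphi\xi=\alpha L$ (radical screen transversality, $\alpha$ nowhere zero) by $\overline\nabla$ and using $F=0$, $\overline g(\xi,L)=0$, and self-adjointness of $\overline\varphi$ one obtains $2\varepsilon D_2(X,\xi)=0$. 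For the screen part I would invoke the structure equations of ascreen radical screen transversal submanifolds: $\overline\varphi$ maps $S(TM)$ onto itself as a complex structure ($\overline\varphi^2=-\id$ there), $\widetilde g(X,Y)=g(X,\overline\varphi Y)$ on $S(TM)$ (the ascreen condition makes $\overline\eta$ vanish on $S(TM)$), and — using $F=0$ once more — $D_2(X,Y)=(\varepsilon\alpha)^{-1}D_1(X,\overline\varphi Y)$ for $Y\in S(TM)$ with $A^*_\xi$ commuting with $\overline\varphi|_{S(TM)}$, where $D_1(X,Y)=\overline g(A^*_\xi X,Y)$. In a $g$-orthonormal frame of $S(TM)$, $\overline\varphi^2=-\id$ gives $\tr_{\widetilde g}^{S(TM)}B=-\tr_g^{S(TM)}(B(\overline\varphi\cdot,\cdot))$ for any bilinear $B$; feeding in the relations above yields, up to the chosen normalisations,
\[
\tr_{\widetilde g}^{S(TM)}D_1=-\varepsilon\alpha\,\tr_g^{S(TM)}D_2,\qquad \tr_{\widetilde g}^{S(TM)}D_2=(\varepsilon\alpha)^{-1}\,\tr_g^{S(TM)}D_1 .
\]
Since $\varepsilon\alpha\neq0$, the condition $\tr_{\widetilde g}D_1=\tr_{\widetilde g}D_2=0$ is equivalent to $\tr_g^{S(TM)}D_1=\tr_g^{S(TM)}D_2=0$; as $D_1$ and $D_2$ vanish whenever an argument lies in $\mathrm{Rad}(TM)$, this is exactly the condition that $(M,g)$ be a minimal lightlike submanifold, which proves (i)$\Leftrightarrow$(ii).

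The hard part is this last step: $g$ and $\widetilde g$ restrict to genuinely different (though both non-degenerate) metrics on $S(TM)$, so the two trace operations are a priori unrelated, and the equivalence survives only because on an $\F_0$-manifold $\overline\varphi$ interlaces $D_1$ and $D_2$ in exactly the way needed for the $\overline\varphi$-twisted and untwisted screen traces to vanish simultaneously, while — by radical screen transversality together with \thmref{Theorem 1.1} — the radical line contributes nothing to either mean curvature. Everything else is a formal consequence of $\overline\nabla^{\overline g}=\overline\nabla^{\overline{\widetilde g}}$ on $\F_0$-manifolds and of \thmref{Theorem 1.1}.
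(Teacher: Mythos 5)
Your proof is correct and follows essentially the same route as the paper: both exploit $\overline\nabla=\overline{\widetilde\nabla}$ on $\F_0$-manifolds to compare the two Gauss formulas, invoke the relations of Proposition~\ref{Proposition 5.1} ($D=(1/\mu)B(\cdot,\overline\varphi P\cdot)$, $\phi=0$, $B(\overline\varphi\cdot,\overline\varphi\cdot)=-B$ on $S(TM)$) together with the vanishing of the radical contributions, and reduce both minimality conditions to the same pair of scalar constraints. The only difference is presentational: you package the comparison of $\mathrm{trace}_{\widetilde g}$ and $\mathrm{trace}_{g|S(TM)}$ into the coordinate-free identity $\mathrm{trace}_{\widetilde g}T=-\mathrm{trace}_{g}\bigl(T(\overline\varphi\cdot,\cdot)\bigr)$ on $S(TM)$, whereas the paper works with the two explicit adapted orthonormal bases $\{e_i,\overline\varphi e_i\}$ and $\{(e_i\pm\overline\varphi e_i)/\sqrt{2}\}$ and shows both traces vanish iff $\sum_iB(e_i,e_i)=\sum_iB(e_i,\overline\varphi e_i)=0$.
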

We give an example that confirms Theorem \ref{Theorem 1.1} and Theorem \ref{Theorem 1.2} and note that the constructed submanifolds are proper minimal, i.e. they are not totally geodesic.
\section{Preliminaries}\label{sec-2}
A $(2n+1)$-dimensional smooth manifold  $(\overline M,\overline \varphi,\overline \xi,\overline \eta,\overline g)$ is called {\it an almost contact B-metric manifold}  \cite{GaMGri} if it is endowed with an almost contact structure $(\overline \varphi,\overline \xi,\overline \eta)$ consisting of an endomorphism $\overline \varphi $ of the tangent bundle, a vector field $\overline \xi $ and 1-form 
$\overline \eta $, satisfying the relations:
\begin{equation*}
\overline \varphi^2X=-X+\overline \eta(X)\overline \xi, \qquad \quad \overline \eta(\overline \xi)=1,
\end{equation*}
where $X\in T\overline M$. Also, $\overline M $ is equipped with a semi-Riemannian metric $\overline g$, called {\it a B-metric} \cite{GaMGri}, determined by
$\overline g(\overline \varphi X,\overline \varphi Y)=-\overline g(X,Y)+\overline \eta(X)\overline \eta(Y), \,  X,Y\in T\overline M$.
Immediate consequences of the above conditions are:
\begin{equation*}
\overline \eta \circ \overline \varphi =0, \quad \overline \varphi \overline \xi =0, \quad {\rm rank}(\overline \varphi)=2n, \quad \overline \eta (X)=
\overline g(X,\overline \xi ), \quad \overline g(\overline \xi,\overline \xi )=1.
\end{equation*}
The tensor field $\overline {\widetilde g}$ of type $(0,2)$ given by 
$\overline {\widetilde g}(X,Y)=\overline g(X,\overline \varphi Y)+\overline \eta (X)\overline \eta (Y)$
is a B-metric, called {\it an associated metric} to $\overline g$. Both metrics $\overline g$ and 
$\overline {\widetilde g}$ are necessarily of signature $(n+1,n)$. 
Throughout this paper, for the orthonormal basis the signature of the metric $g$ will be of the form $(+\ldots + -\ldots -)$.\\
Let $\overline \nabla$ be the Levi-Civita connection of $\overline g$. A classification of the almost contact B-metric manifolds with respect to the tensor $F(X,Y,Z)=\overline g((\overline \nabla_X\overline \varphi)Y,Z)$ is given in \cite{GaMGri} and eleven basic classes  $\F_i$ $(i=1,2,\dots,11)$ are obtained. If $(\overline M,\overline \varphi,\overline \xi,\overline \eta,\overline g)$ belongs to $\F_i$ then it is called an {$\F_i$-{\it manifold}.
The special class $\F_0$ is determined by the condition $F(X,Y,Z)=0$ and in this class we have
$\overline \nabla \overline \varphi =\overline \nabla \overline \xi =\overline \nabla \overline \eta =\overline \nabla \overline g=\overline \nabla \overline {\widetilde g}=0$.
Let $\overline {\widetilde \nabla }$ be the Levi-Civita connection of $\overline {\widetilde g}$. 
In \cite{GaMGri} it is shown that 
the Levi-Civita connections $\overline \nabla $ and $\overline {\widetilde \nabla }$ of an $\F_0$-manifold coincide.
\par
In the remainder of this section we briefly recall the main notions about half lightlike submanifolds of semi-Riemannian manifolds for which we refer to \cite{D-B, D-S, J}.\\
If $(M,g)$ is a lightlike submanifold of $(\overline M,\overline g)$ then 
both the tangent space $T_xM$ and the normal space $T_xM^\bot  $, $x\in M$, are degenerate orthogonal subspaces of $T_x\overline M$ but they are not complementary. The intersection of $T_xM$ and $T_xM^\bot $ is denoted by  ${\rm Rad} (T_xM)$ and it is called {\it a radical subspace} of $T_x\overline M$. For a lightlike submanifold $M$ {\it the radical distribution} 
${\rm Rad} (TM): x\in M \longrightarrow {\rm Rad} (T_xM)$
is of a constant rank. 
\begin{defn}\label{Definition 2.1}
A lightlike submanifold $(M,g)$ of codimension 2 of $(\overline M,\overline g)$ is called
{\it a half lightlike submanifold} if ${\rm rank}\, {\rm Rad} (TM)=1$. 
\end{defn}
\noindent
For a half lightlike submanifold $(M,g)$ there exist two complementary non-degenerate distributions 
$S(TM)$ and $S(TM^\bot)$ of ${\rm Rad} (TM)$ in the tangent bundle $TM$ and the normal bundle $TM^\bot $, respectively. Thus, the following decompositions are valid:
\begin{equation}\label{2.4}
TM={\rm Rad} (TM)\bot S(TM) , \quad TM^\bot ={\rm Rad} (TM)\bot S(TM^\bot ),
\end{equation}
where the symbol $\bot $ denotes the orthogonal direct sum.
The distributions $S(TM)$ and $S(TM^\bot)$ are called {\it a screen distribution} and {\it a screen transversal bundle} of $M$, respectively. Since ${\rm Rad} (TM)$ is a 1-dimensional subbundle of $TM^\bot $, the screen transversal bundle $S(TM^\bot)$ is also a 1-dimensional subbundle of $TM^\bot $. We choose $L$ as a unit vector field of $S(TM^\bot)$ and put 
$\overline g(L,L)=\epsilon $, where $\epsilon =\pm 1$. It is well known \cite{D-B, D-S} that 
for any $\xi \in \Gamma ({\rm Rad} (TM))$ there exists a unique locally defined vector field $N$ satisfying
$
\overline g(N,\xi )=1, \quad \overline g(N,N)=\overline g(N,L)=\overline g(N,X)=0,  \, \forall X\in \Gamma (S(TM))
$.
The 1-dimensional vector bundle ${\rm ltr}(TM)$ locally represented by the lightlike vector field $N$ is called {\it the lightlike transversal bundle} of $M$ with respect to the screen distribution $S(TM)$. The {\it transversal vector  bundle} ${\rm tr}(TM)$ of $M$ with respect to $S(TM)$ is the complementary (but never orthogonal) vector bundle to $TM$ in $T\overline M$ such that
${\rm tr}(TM)=S(TM^\bot)\bot {\rm ltr}(TM)$.
Thus, for $T\overline M$ we have
\begin{equation}\label{2.6}
T\overline M=TM\oplus {\rm tr}(TM)=\{{\rm Rad} (TM)\oplus {\rm ltr}(TM)\}\bot S(TM)\bot S(TM^\bot),
\end{equation}
where $\oplus $ denotes a non-orthogonal direct sum.
Denote by $P$ the projection of $TM$ on $S(TM)$, from the first decomposition in  \eqref{2.4} for any
$X \in \Gamma (TM)$ we obtain $X=PX+\eta (X)\xi $, where $\eta $ is a differential 1-form on $M$ given by $\eta (X)=\overline g(X,N)$.
\par
The local Gauss-Weingarten formulas of $(M,g)$ and $S(TM)$ are given by
\begin{equation}\label{2.8}
\begin{array}{l}
\overline \nabla _XY=\nabla _XY+B(X,Y)N+D(X,Y)L ,
\end{array}
\end{equation}
\begin{equation}\label{2.9}
\begin{array}{l}
\overline \nabla _XN=-A_NX+\tau (X)N+\rho (X)L ,
\end{array}
\end{equation}
\begin{equation}\label{2.10}
\begin{array}{l}
\overline \nabla _XL=-A_LX+\phi (X)N ;
\end{array}
\end{equation}
and $\nabla _XPY=\nabla ^*_XPY+C(X,PY)\xi $, \, $\nabla _X\xi =-A^*_\xi X-\tau (X)\xi $, 
$X, Y \in \Gamma (TM)$. The induced connections $\nabla $ and 
$\nabla ^*$ on $TM$ and $S(TM)$, respectively, are linear connections; $A_N$, $A_L$ and
$A^*_\xi $ are the shape operators on $TM$ and $S(TM)$ and $\tau $, $\rho $, 
$\phi $ are 1-forms on $TM$. The local second fundamental forms $B$ and $D$ are called 
{\it the lightlike second fundamental form} and {\it the screen second fundamental form} of 
$M$, respectively, and $C$ -- {\it the local second fundamental form} of $S(TM)$. The local second fundamental forms $B$, $C$ and $D$ are related to their shape operators as follows:
\begin{equation*}
B(X,Y)=g(A^*_\xi X,Y), \, \overline g(A^*_\xi X,N)=0; \, \, C(X,PY)=g(A_N X,PY), \, \overline g(A_N X,N)=0;
\end{equation*}
\begin{equation*}
\epsilon D(X,PY)=g(A_L X,PY), \quad  \overline g(A_L X,N)=\epsilon \rho (X),  
\end{equation*}
\begin{equation*}
\epsilon D(X,Y)=g(A_L X,PY)-\phi (X)\eta (Y), \qquad \forall X, Y\in \Gamma (TM).
\end{equation*}
Since $\overline \nabla $ is torsion-free, $\nabla $ is also torsion-free. Therefore $B$ and $D$ are symmetric $F(M)$-bilinear forms on $\Gamma (TM)$. Also we have
\begin{equation*}
B(X,\xi )=0, \quad D(X,\xi )=-\phi (X), \quad \forall X\in \Gamma (TM).
\end{equation*}
In general, the induced connection $\nabla $ is not metric and satisfies
\begin{equation*}
(\nabla _Xg)(Y,Z)=B(X,Y)\eta (Z)+B(X,Z)\eta (Y).
\end{equation*}
The linear connection $\nabla ^*$ is not torsion-free but it is a metric connection on $S(TM )$.
The shape operators $A^*_\xi $ and $A_N$ are $\Gamma (S(TM))$-valued, $A^*_\xi $ is self-adjoint with respect to $g$ and $A^*_\xi \xi =0$.
\section{Radical screen transversal half lightlike submanifolds of almost contact B-metric manifolds}\label{sec-3}
Let $(M,g,S(TM))$ be a half lightlike submanifold of $(\overline M,\overline \varphi,\overline \xi,\overline \eta,\overline g)$, where $\overline M$ is an indefinite almost contact metric manifold or an almost contact B-metric manifold. In both cases, taking into account \eqref{2.6}, we have the following decomposition for $\overline \xi$:
\begin{equation}\label{3.1}
\overline \xi =\xi _0+a\xi +bN+cL ,
\end{equation}
where $\xi _0$ is a smooth vector field on $S(TM)$ and $a, b, c$ are smooth functions on $\overline M$. According to the decomposition \eqref{3.1} of $\overline \xi$, tangential and ascreen half lightlike submanifolds of indefinite cosymplectic and indefinite Sasakian manifolds have been studied in \cite{J, J2}. Analogously as in \cite{J, J2}, we define such submanifolds when the ambient manifold is an almost contact B-metric manifold. A half lightlike submanifold $M$ of an almost contact B-metric manifold $\overline M$ is said to be:
{\it tangential} if $\overline \xi $ is tangent to $M$;
{\it ascreen} if $\overline \xi $  belongs to ${\rm Rad} (TM)\oplus {\rm ltr} (TM)$.
The considered tangential and ascreen half lightlike submanifolds in \cite{J, J2} are equipped with screen distributions $S(TM)$ such that 
$\overline \varphi ({\rm Rad} (TM))$,   
$\overline \varphi ({\rm ltr} (TM))$ and $\overline \varphi (S(TM^\bot ))$ belong to $S(TM)$.
Such $S(TM)$ is called {\it a generic screen distribution}.
In this section we define a  half lightlike submanifold of an almost contact B-metric manifold with a non-generic screen distribution and prove that this submanifold is non-tangential.
\begin{defn}\label{Definition 3.3}
We say that a half lightlike submanifold $M$ of an almost  contact B-metric manifold $(\overline M,\overline \varphi,\overline \xi,\overline \eta,\overline g)$ is {\it a Radical Screen Transversal  Half Lightlike (RSTHL) submanifold} of $\overline M$ if $\overline \varphi ({\rm Rad} (TM))=S(TM^\bot )$.
\end{defn}
\begin{pro}\label{Proposition 3.1}
Let $(M,g)$ be an RSTHL submanifold of an almost contact B-metric manifold 
$(\overline M,\overline g)$ and $S(TM)={\rm span} \{L\}$, where $L$ is a unit vector field. Then
\begin{enumerate}
\item[(i)] $L$ is orthogonal to $\overline \xi $ and it is spacelike (i.e. $\overline g(L,L)=1$).
\item[(ii)] $\overline \xi \in TM\oplus {\rm ltr} (TM)$ but $\overline \xi $ does not belong neither to $TM$, nor to ${\rm ltr} (TM)$.
\end{enumerate}
\end{pro}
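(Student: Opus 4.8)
The plan is to extract everything from the defining relation $\overline \varphi(\mathrm{Rad}(TM)) = S(TM^\bot)$ together with the algebraic identities of the almost contact B-metric structure. Since $S(TM^\bot)$ is spanned by the unit field $L$, that relation lets me write $\overline \varphi \xi = \lambda L$ for a locally defined function $\lambda$, where $\xi \in \Gamma(\mathrm{Rad}(TM))$. I would first observe that $\lambda$ is nowhere zero: from $\overline \varphi^2 X = -X + \overline \eta(X)\overline \xi$ the kernel of $\overline \varphi$ is $\mathrm{span}\{\overline \xi\}$, and the nowhere-zero section $\xi$ cannot be pointwise proportional to $\overline \xi$, because $\overline g(\overline \xi,\overline \xi)=1$ whereas $\xi$ is lightlike; hence $\overline \varphi \xi \ne 0$, i.e.\ $\lambda\ne 0$ on the domain of $L$.

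For part (i), orthogonality of $L$ and $\overline \xi$ follows at once from $\overline \eta \circ \overline \varphi = 0$: indeed $0 = \overline \eta(\overline \varphi \xi) = \overline g(\overline \varphi \xi,\overline \xi) = \lambda\,\overline g(L,\overline \xi)$, so $\overline g(L,\overline \xi)=0$. For the causal character I would apply the B-metric compatibility to $\xi$,
\[
\overline g(\overline \varphi \xi,\overline \varphi \xi) = -\overline g(\xi,\xi) + \overline \eta(\xi)^2 = \overline \eta(\xi)^2,
\]
which also equals $\lambda^2\,\overline g(L,L) = \epsilon\lambda^2$. Since $\lambda \ne 0$ this forces $\epsilon = 1$, i.e.\ $L$ is spacelike; as a by-product I record $\overline \eta(\xi)^2 = \lambda^2 \ne 0$, hence $\overline \eta(\xi)\ne 0$, which is the fact that drives part (ii).

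For part (ii), I would substitute the decomposition \eqref{3.1}, $\overline \xi = \xi_0 + a\xi + bN + cL$ with $\xi_0 \in \Gamma(S(TM))$, into the orthogonality relations among $\xi$, $N$, $L$ and $S(TM)$. Pairing with $L$ and using $\overline g(L,\xi_0) = \overline g(L,\xi) = \overline g(L,N) = 0$ and $\overline g(L,L)=1$, the identity $\overline g(L,\overline \xi)=0$ forces $c = 0$, so $\overline \xi = \xi_0 + a\xi + bN \in TM \oplus \mathrm{ltr}(TM)$. Pairing \eqref{3.1} with $\xi$ and using $\overline g(\xi,\xi_0) = \overline g(\xi,\xi)=0$, $\overline g(\xi,N)=1$ gives $\overline \eta(\xi) = \overline g(\xi,\overline \xi) = b$; since $\overline \eta(\xi)\ne 0$ by (i), we get $b\ne 0$, so $\overline \xi \notin TM$. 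Finally, if $\overline \xi$ lay in $\mathrm{ltr}(TM) = \mathrm{span}\{N\}$ then $\overline \xi = bN$ and $1 = \overline g(\overline \xi,\overline \xi) = b^2\,\overline g(N,N) = 0$, a contradiction; hence $\overline \xi \notin \mathrm{ltr}(TM)$.

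The whole argument is essentially bookkeeping with the structure identities and the pairing relations of a half lightlike submanifold, so I do not anticipate a genuine obstacle. The only delicate point is the non-vanishing of $\lambda$ — that $\overline \varphi \xi$ really spans $S(TM^\bot)$ rather than degenerating — and this is precisely where the clash between the lightlike character of $\xi$ and the unit length of $\overline \xi$ is used; everything else is a direct consequence of the B-metric identities combined with the orthogonal decompositions \eqref{2.4}--\eqref{2.6}.
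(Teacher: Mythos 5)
Your proof is correct and follows essentially the same route as the paper: deducing $\overline g(L,\overline\xi)=0$ from $\overline\eta\circ\overline\varphi=0$, forcing $\epsilon=1$ via $\overline g(\overline\varphi\xi,\overline\varphi\xi)=\overline\eta(\xi)^2=\epsilon\lambda^2$ with $\lambda\neq0$, and then reading off $c=0$, $b=\overline\eta(\xi)\neq0$ from the decomposition \eqref{3.1}. The only (harmless) addition is your explicit justification that $\lambda\neq0$, which the paper takes as immediate from the bundle equality $\overline\varphi(\mathrm{Rad}(TM))=S(TM^\bot)$ in Definition~\ref{Definition 3.3}.
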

\begin{proof}
(i) Let $\xi \in \Gamma ({\rm Rad} (TM))$. According to Definition~\ref{Definition 3.3} we have
\begin{equation}\label{3.2}
\overline \varphi \xi =\mu L,
\end{equation}
where $\mu $ is a non-zero smooth function on $M$. From \eqref{3.2} it follows
\begin{equation}\label{3.3}
\overline \eta (L)=0 .
\end{equation}
Now, from $\overline g(\overline \varphi \xi ,\overline \varphi \xi )=\mu ^2\overline g(L,L)$ and
$\overline g(\overline \varphi \xi ,\overline \varphi \xi )=\overline \eta ^2(\xi )$ we derive
\begin{equation}\label{3.4}
\overline g(L,L)=1,
\end{equation}
\begin{equation}\label{3.5}
\overline \eta (\xi )=\pm \mu ,
\end{equation}
which completes the proof.\\
(ii) For the functions $a$, $b$, $c$ in \eqref{3.1}, taking into account \eqref{3.3} and \eqref{3.5},  we get
$a=\overline \eta (N), \quad b=\overline \eta (\xi )=\pm \mu , \quad c=\overline \eta (L)=0$.
Then \eqref{3.1} becomes
\begin{equation}\label{3.6}
\overline \xi =\xi _0+\overline \eta (N)\xi \pm \mu N,
\end{equation}
which means that $\overline \xi \in TM\oplus {\rm ltr} (TM)$.
Since $\mu \neq 0$, from \eqref{3.6} it is clear that $\overline \xi $ is not tangent to $M$. Let us assume that in \eqref{3.6} the tangential part of $\overline \xi $ is zero. Then  
$\overline g(\overline \xi ,\overline \xi )=0$, which is a contradiction. Thus, $\overline \xi $ does not belong to ${\rm ltr} (TM)$.
\end{proof}
As an immediate consequence of the assertion (ii) of Proposition~\ref{Proposition 3.1} 
we state
\begin{co}
There exist ascreen RSTHL submanifolds of almost contact B-metric manifolds and $\overline \xi $ does not belong neither to ${\rm Rad} (TM)$, nor to ${\rm ltr} (TM)$.
\end{co}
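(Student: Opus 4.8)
The plan is to extract the statement directly from the decomposition \eqref{3.6} established in Proposition~\ref{Proposition 3.1}, namely $\overline\xi=\xi_0+\overline\eta(N)\xi\pm\mu N$ with $\xi_0\in\Gamma(S(TM))$ and $\mu\neq0$, together with assertion (ii) of that proposition. First I would record that, for an RSTHL submanifold, the ascreen condition $\overline\xi\in\Gamma({\rm Rad}(TM)\oplus{\rm ltr}(TM))$ is, in view of the direct sum \eqref{2.6} and of \eqref{3.6}, equivalent to $\xi_0=0$, i.e. to $\overline\xi=\overline\eta(N)\xi\pm\mu N$; and then $\overline g(\overline\xi,\overline\xi)=1$ combined with $\overline g(\xi,\xi)=\overline g(N,N)=0$, $\overline g(\xi,N)=1$ forces $\pm2\mu\,\overline\eta(N)=1$, hence $\overline\eta(N)\neq0$.

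For the two non-inclusions I would then reason as follows. If $\overline\xi\in\Gamma({\rm Rad}(TM))$ then $\overline\xi$ would be tangent to $M$, which Proposition~\ref{Proposition 3.1}(ii) rules out; equivalently, in the ascreen case the component $\pm\mu N$ of $\overline\xi$ is nonzero and transversal. If $\overline\xi\in\Gamma({\rm ltr}(TM))$ then the tangential part $\xi_0+\overline\eta(N)\xi$ of $\overline\xi$ in \eqref{3.6} would vanish, which is exactly the case excluded in the proof of Proposition~\ref{Proposition 3.1}(ii) since it yields $\overline g(\overline\xi,\overline\xi)=0$; equivalently, $\overline\eta(N)\neq0$ shows the $\xi$-component of $\overline\xi$ is nonzero. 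Hence $\overline\xi$ belongs to neither ${\rm Rad}(TM)$ nor ${\rm ltr}(TM)$.

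What remains, and is the only non-formal point, is the existence claim: Proposition~\ref{Proposition 3.1} reduces it to realizing an RSTHL submanifold with $\xi_0=0$, the normalization $\overline\eta(N)=\pm\frac{1}{2\mu}$ being automatically compatible with $\overline g(\overline\xi,\overline\xi)=1$. I would settle this by pointing to the explicit example constructed later in the paper, which is an ascreen RSTHL submanifold and is used to confirm Theorem~\ref{Theorem 1.1} and Theorem~\ref{Theorem 1.2}; it exhibits \eqref{3.6} with $\xi_0=0$ and $\mu\neq0$. Thus the main obstacle is not the computation, which is immediate from \eqref{3.6} and Proposition~\ref{Proposition 3.1}(ii), but exhibiting a concrete ascreen RSTHL submanifold, which the example supplies.
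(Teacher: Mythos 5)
Your proposal is correct and takes essentially the same route as the paper: the paper presents this corollary as an immediate consequence of Proposition~\ref{Proposition 3.1}(ii), i.e.\ of the decomposition \eqref{3.6} in which both the tangential and the ${\rm ltr}(TM)$-components of $\overline \xi$ are nonzero, which is exactly how you rule out the two memberships. Your explicit appeal to the concrete construction of Section~\ref{sec-7} to settle the existence claim matches the role that example plays in the paper and, if anything, makes the argument slightly more complete.
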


\section{Ascreen RSTHL submanifolds of almost contact B-metric manifolds. Proof of Theorem \ref{Theorem 1.1}}\label{sec-4}  
We note that for an ascreen RSTHL submanifold of an almost contact B-metric manifold
the equalities \eqref{3.2}, \eqref{3.3}, \eqref{3.4} and \eqref{3.5} hold. Without loss of generality we assume that $\overline \eta (\xi )=\mu $. Now, replacing $\xi _0$ in \eqref{3.6} with $0$, we obtain                 
\begin{equation}\label{4.1}
\overline \xi =\overline \eta (N)\xi +\mu N.
\end{equation}
By using \eqref{4.1} and the equality $\overline g(\overline \xi ,\overline \xi )=1$, we find
\begin{equation}\label{4.2}
\overline \eta (N)=1/2\mu  .
\end{equation}
Substituting \eqref{4.2} in \eqref{4.1} we have
\begin{equation}\label{4.3}
\overline \xi =(1/2\mu )\xi +\mu N.
\end{equation}
Applying $\overline \varphi $ to the both sides of \eqref{4.3}, using that $\overline \varphi \overline \xi =0$ and \eqref{3.2}, we get
\begin{equation}\label{4.4}
\overline \varphi N=-(1/2\mu )L .
\end{equation}
Now, applying $\overline \varphi $ to the both sides of \eqref{3.2} we obtain
\begin{equation}\label{4.5}
\overline \varphi L=-(1/2\mu )\xi +\mu N .
\end{equation}
Using \eqref{3.2}, \eqref{4.3},\eqref{4.4} and \eqref{4.5} we state
\begin{pro}\label{Proposition 4.1}
Let $(M,g,S(TM))$ be an ascreen RSTHL submanifold of an almost contact B-metric manifold 
$(\overline M,\overline \varphi,\overline \xi,\overline \eta,\overline g)$. Then the following assertions are fulfilled:
\begin{enumerate}
[align=parleft]
\item[(i)] $\overline \varphi ({\rm ltr} (TM))=S(TM^\bot )$, \, $\overline \varphi (S(TM))=S(TM)$.
\item[(ii)]  $\overline \varphi (S(TM^\bot ))\in {\rm Rad} (TM)\oplus {\rm ltr} (TM)$ but 
$\overline \varphi (S(TM^\bot ))$ does not coincide neither with ${\rm Rad} (TM)$, nor with ${\rm ltr} (TM)$.
\end{enumerate}
\end{pro}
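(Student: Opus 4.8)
The plan is to read everything off the four structural identities \eqref{3.2}, \eqref{4.3}, \eqref{4.4} and \eqref{4.5}, which already express $\overline\varphi\xi$, $\overline\varphi N$, $\overline\varphi L$ and $\overline\xi$ in terms of a local frame adapted to the decomposition \eqref{2.6}, and to combine them with one elementary fact: $\overline\varphi$ is \emph{self-adjoint} with respect to $\overline g$, i.e. $\overline g(\overline\varphi X,Y)=\overline g(X,\overline\varphi Y)$. This follows by replacing $Y$ with $\overline\varphi Y$ in the B-metric identity $\overline g(\overline\varphi X,\overline\varphi Y)=-\overline g(X,Y)+\overline\eta(X)\overline\eta(Y)$ and using $\overline\varphi^2Y=-Y+\overline\eta(Y)\overline\xi$, $\overline\eta\circ\overline\varphi=0$ and $\overline g(\overline\varphi X,\overline\xi)=\overline\eta(\overline\varphi X)=0$. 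I also use repeatedly the orthogonality relations built into \eqref{2.6} and into the choice of $N$: any $X\in\Gamma(S(TM))$ is $\overline g$-orthogonal to $\xi$, to $N$ and to $L$, while $\overline g(N,N)=\overline g(N,L)=0$ and $\overline g(N,\xi)=1$.

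\emph{Proof of (i).} Since ${\rm ltr}(TM)={\rm span}\{N\}$ and $S(TM^\bot)={\rm span}\{L\}$, identity \eqref{4.4} together with $\mu\neq 0$ gives at once $\overline\varphi({\rm ltr}(TM))=S(TM^\bot)$. For the second equality I first establish the inclusion $\overline\varphi(S(TM))\subseteq S(TM)$. Take $X\in\Gamma(S(TM))$ and decompose $\overline\varphi X=\alpha\xi+\beta N+Z+\gamma L$ along \eqref{2.6}, with $Z\in\Gamma(S(TM))$. Pairing with $N$, $\xi$ and $L$ (and using $\overline g(L,L)=1$) recovers $\alpha=\overline g(\overline\varphi X,N)$, $\beta=\overline g(\overline\varphi X,\xi)$, $\gamma=\overline g(\overline\varphi X,L)$, which by self-adjointness equal $\overline g(X,\overline\varphi N)$, $\overline g(X,\overline\varphi\xi)$, $\overline g(X,\overline\varphi L)$; by \eqref{3.2}, \eqref{4.4}, \eqref{4.5} each of $\overline\varphi N$, $\overline\varphi\xi$, $\overline\varphi L$ is a linear combination of $\xi$, $N$ and $L$ only, all of which are $\overline g$-orthogonal to $X$, so $\alpha=\beta=\gamma=0$ and $\overline\varphi X=Z\in\Gamma(S(TM))$. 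To promote this to an equality, observe that \eqref{4.3} gives $\overline\eta(X)=\overline g(X,\overline\xi)=0$ for $X\in\Gamma(S(TM))$, whence $\overline\varphi^2X=-X+\overline\eta(X)\overline\xi=-X$; thus $\overline\varphi$ restricts to an isomorphism of $S(TM)$ and $\overline\varphi(S(TM))=S(TM)$.

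\emph{Proof of (ii).} Identity \eqref{4.5} reads $\overline\varphi L=-(1/2\mu)\xi+\mu N$; since $\xi\in\Gamma({\rm Rad}(TM))$ and $N\in\Gamma({\rm ltr}(TM))$ this already shows $\overline\varphi(S(TM^\bot))\subseteq{\rm Rad}(TM)\oplus{\rm ltr}(TM)$. Because $\mu\neq0$, both the $\xi$-component $-(1/2\mu)$ and the $N$-component $\mu$ of $\overline\varphi L$ are non-zero, so the line $\overline\varphi(S(TM^\bot))$ coincides with neither ${\rm Rad}(TM)$ (which would force the $N$-component to vanish) nor ${\rm ltr}(TM)$ (which would force the $\xi$-component to vanish).

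All the steps above are routine once \eqref{3.2}, \eqref{4.3}, \eqref{4.4}, \eqref{4.5} are in hand; the only point requiring a little attention is the inclusion $\overline\varphi(S(TM))\subseteq S(TM)$, where one must invoke the self-adjointness of $\overline\varphi$ with respect to $\overline g$ --- the defining B-metric feature --- rather than the skew-symmetry available in the classical indefinite almost contact metric setting, and carefully record that $X\in\Gamma(S(TM))$ is orthogonal to all of $\xi$, $N$, $L$. I do not anticipate any genuine obstacle.
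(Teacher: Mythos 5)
Your proof is correct and follows essentially the same route as the paper, which simply asserts Proposition~4.1 as a direct consequence of the identities \eqref{3.2}, \eqref{4.3}, \eqref{4.4} and \eqref{4.5} without writing out the verification. The details you supply — in particular the self-adjointness of $\overline\varphi$ with respect to the B-metric, used to show $\overline\varphi(S(TM))\subseteq S(TM)$, and the observation $\overline\eta|_{S(TM)}=0$ giving $\overline\varphi^2X=-X$ and hence equality — are exactly the computations the author leaves implicit, and they all check out.
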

{\bf Proof of Theorem \ref{Theorem 1.1}.} (i): First, by using $\overline \varphi \overline \xi =0$,  \eqref{4.4} and \eqref{4.5}, we get
\begin{equation}\label{4.7}
\overline \varphi N_1=(1/2\mu )\xi -\mu N, \quad \overline \varphi N_2=(1/2\mu )\xi -\mu N+L.
\end{equation}
With the help of \eqref{3.3}, \eqref{4.2} and $\overline \eta (\overline \xi )=1$ we find
\begin{equation}\label{4.8}
\overline \eta (N_1)=\overline \eta (N_2)=1 .
\end{equation}
By direct calculations, applying \eqref{4.7} and \eqref{4.8}, we verify that $N_1$ and $N_2$ satisfy the equalities \eqref{1.1}. Take $X\in \Gamma (TM)$ and exploiting \eqref{4.1} we have
\begin{equation}\label{4.9}
\overline \eta (X)=\mu \eta (X).
\end{equation}
By employing \eqref{4.7}, \eqref{4.8} and \eqref{4.9} we obtain $\overline {\widetilde g}(X,N_1)=\overline {\widetilde g}(X,N_2)=0$ for any $X\in \Gamma (TM)$, which means that the vector fields $N_1$ and $N_2$ are normal to $(M,\widetilde g)$. Since the normal bundle $TM^{\widetilde \bot }$ of $(M ,\widetilde g)$ is of dimension 2, it follows that the orthonormal pair $\{N_1,N_2\}$ with respect to $\overline {\widetilde g}$ form a basis of $TM^{\widetilde \bot }$. According to 
\eqref{1.1}, the signature of $\overline {\widetilde g}$ on $TM^{\widetilde \bot }$ is $(1,1)$. Therefore $\overline {\widetilde g}$ is non-degenerate on  $TM^{\widetilde \bot }$.
Now, let us assume that the induced metric $\widetilde g$ on $M$ by $\overline {\widetilde g}$ is degenerate. Then there exists a vector field $U\in \Gamma (TM)$ such that $U\neq 0$ and 
$\widetilde g(U,X)=0$ for any $X\in \Gamma (TM)$. Hence, $U$ belongs to the normal bundle 
$TM^{\widetilde \bot }$ of $(M,\widetilde g)$. Then, taking into account that $\overline {\widetilde g}(U,N_1)=\overline {\widetilde g}(U,N_2)=0$ and $\widetilde g(U,U)=0$, we conclude that  
$\overline {\widetilde g}$ is degenerate on $TM^{\widetilde \bot }$, which is a contradiction. So we established that  $(M,\widetilde g)$ is a semi-Riemannian submanifold of $\overline M$ of codimension two.\\
(ii): Direct calculations show that $\widetilde g(X,\xi )=0$ and $\widetilde g(\xi ,\xi )=\mu ^2$ for any $X\in \Gamma S(TM)$ and $\xi \in {\rm Rad} (TM)$ Thus, it follows that
$TM=S(TM)\widetilde \bot {\rm Rad} (TM)$ ($\widetilde \bot $ denotes the orthogonal direct sum with respect to $\widetilde g$) and ${\rm Rad} (TM)$ is spacelike. The latter assertions and $TM$ is non-degenerate imply that $S(TM)$ is also non-degenerate. Taking into account that the signature of 
$\widetilde g$ on $(M,\widetilde g)$ is $(n,n-1)$ and ${\rm Rad} (TM)$ is spacelike, we complete the proof.
\par
From Theorem \ref{Theorem 1.1} and $\overline \varphi S(TM)=S(TM)$ it is clear that  there exists an orthonormal basis 
$\{e_1,\ldots ,e_{n-1},\overline \varphi e_1,\ldots ,\overline \varphi e_{n-1}\}$ of $S(TM)$ with respect to $\widetilde g$ and 
\begin{equation}\label{4.10}
\left\{(1/\mu )\xi ,e_1,\ldots ,e_{n-1},\overline \varphi e_1,\ldots ,\overline \varphi e_{n-1}\right\}
\end{equation}
is an orthonormal basis of $TM$ with respect to $\widetilde g$.
\par
At the end of this section we provide the following results:
\begin{pro}\label{Proposition 5.1}
For an ascreen RSTHL submanifold $(M,g)$ of $\overline M\in \F_0$ we have: 
\begin{enumerate}
\item[(i)] $A_NX=-(1/2\mu ^2)A^*_\xi X$, \quad $A_LX=(1/\mu )\overline \varphi (A^*_\xi X)$, \\
$D(X,Y)=(1/\mu )B(X,\overline \varphi (PY)), \quad C(X,PY)=-(1/2\mu ^2)B(X,Y)$, \\
$\tau (X)=-X(\mu )/\mu $, \quad $\phi (X)=\rho (X)=0$, for any  $X, Y \in \Gamma (TM)$.
\item[(ii)] The shape operators $A^*_\xi$, $A_N$ and $A_L$ commute with $\overline \varphi $ on $S(TM)$.
\item[(iii)] $B(\overline \varphi X,\overline \varphi Y)=-B(X,Y)$, \, $\forall X,Y\in S(TM)$. 
\end{enumerate}
\end{pro}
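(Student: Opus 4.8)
Every identity in the statement is obtained by differentiating the structural relations \eqref{3.2}, \eqref{4.3}, \eqref{4.4}, \eqref{4.5}, using that on an $\F_0$-manifold $\overline\nabla\overline\varphi=\overline\nabla\overline\xi=\overline\nabla\overline\eta=0$, together with the Gauss--Weingarten formulas \eqref{2.8}--\eqref{2.10}, the companion equation $\nabla_X\xi=-A^*_\xi X-\tau(X)\xi$, and the relations $B(X,\xi)=0$, $D(X,\xi)=-\phi(X)$ from \S\ref{sec-2}. I would also use that $\overline\varphi$ is self-adjoint with respect to $\overline g$ (an immediate consequence of the B-metric identity and $\overline\eta\circ\overline\varphi=0$), that $\overline\varphi$ preserves $S(TM)$ with $\overline\varphi^2=-\id$ there (Proposition~\ref{Proposition 4.1}(i), together with $\overline\eta|_{S(TM)}=0$), and that $A^*_\xi$ and $A_N$ are $S(TM)$-valued.

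I would begin by differentiating \eqref{3.2}. Since $\overline\nabla_X(\overline\varphi\xi)=\overline\varphi(\overline\nabla_X\xi)$, substituting the Gauss--Weingarten data on both sides and expanding $\overline\varphi\xi$, $\overline\varphi N$, $\overline\varphi L$ via \eqref{3.2}, \eqref{4.4}, \eqref{4.5} produces an identity in $T\overline M$, which I would split along the decomposition \eqref{2.6}. Recalling that $A^*_\xi X$ is $S(TM)$-valued while $A_LX\in\Gamma(TM)$ carries a ${\rm Rad}(TM)$-component $\rho(X)\xi$ (because $\overline g(A_LX,N)=\rho(X)$), comparison of the $N$-, $\xi$-, $L$- and $S(TM)$-parts gives in turn $\phi(X)=0$, $\rho(X)=0$, $\tau(X)=-X(\mu)/\mu$, and $A_LX=(1/\mu)\overline\varphi(A^*_\xi X)$. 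Then, differentiating \eqref{4.3} with $\overline\nabla\overline\xi=0$ and feeding in $\phi=\rho=0$ and $\tau(X)=-X(\mu)/\mu$, the $\xi$- and $N$-components of the resulting identity cancel identically (a consistency check) while the $S(TM)$-component collapses to $A_NX=-(1/2\mu^2)A^*_\xi X$.

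The formulas for $C$ and $D$ are then algebraic. From $C(X,PY)=g(A_NX,PY)$ and the fact that $A^*_\xi X\in S(TM)$ is orthogonal to $\xi$ one gets $g(A^*_\xi X,PY)=B(X,Y)$, hence $C(X,PY)=-(1/2\mu^2)B(X,Y)$. From $D(X,PY)=g(A_LX,PY)$ (here $\epsilon=1$ by Proposition~\ref{Proposition 3.1}) and self-adjointness of $\overline\varphi$, $g(\overline\varphi(A^*_\xi X),PY)=\overline g(A^*_\xi X,\overline\varphi(PY))=B(X,\overline\varphi(PY))$, and since $\phi=0$ gives $D(X,Y)=D(X,PY)$, we obtain $D(X,Y)=(1/\mu)B(X,\overline\varphi(PY))$. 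For part (iii), symmetry of $D$ applied on $S(TM)$, where it reads $D(X,Y)=(1/\mu)B(X,\overline\varphi Y)$, forces $B(X,\overline\varphi Y)=B(Y,\overline\varphi X)$; with symmetry of $B$ this becomes $B(\overline\varphi X,Y)=B(X,\overline\varphi Y)$, and replacing $Y$ by $\overline\varphi Y$ and using $\overline\varphi^2=-\id$ on $S(TM)$ yields $B(\overline\varphi X,\overline\varphi Y)=-B(X,Y)$. For part (ii) it suffices to show $A^*_\xi\overline\varphi=\overline\varphi A^*_\xi$ on $S(TM)$, since the formulas $A_N=-(1/2\mu^2)A^*_\xi$ and $A_L=(1/\mu)\overline\varphi\circ A^*_\xi$ then transfer this at once; and for $X,Y\in\Gamma(S(TM))$ one has $g(A^*_\xi\overline\varphi X,Y)=B(\overline\varphi X,Y)=B(X,\overline\varphi Y)=\overline g(A^*_\xi X,\overline\varphi Y)=g(\overline\varphi A^*_\xi X,Y)$, so $A^*_\xi\overline\varphi X-\overline\varphi A^*_\xi X\in\Gamma(S(TM))$ is $g$-orthogonal to $S(TM)$ and hence vanishes, $g$ being non-degenerate there.

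The computation is almost entirely bookkeeping. The only step needing real attention is the first differentiation, where one must track the decomposition \eqref{2.6} carefully — in particular recognize that $A_LX$ may a priori have a ${\rm Rad}(TM)$-component, so that $\rho=0$ is genuinely a conclusion, not an assumption. The other conceptual point is the $\overline g$-self-adjointness of $\overline\varphi$: it is precisely what lets one move $\overline\varphi$ between the two arguments of $B$, making (ii) and (iii) drop out of the symmetries of $B$ and $D$ without any further covariant-derivative identities.
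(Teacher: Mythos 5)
Your proof is correct, and since the paper states Proposition~\ref{Proposition 5.1} without printing a proof, your derivation — differentiating \eqref{3.2} and \eqref{4.3} with $\overline\nabla\overline\varphi=\overline\nabla\overline\xi=0$, splitting along \eqref{2.6}, and then reading off $C$, $D$, (ii) and (iii) from the symmetries of $B$ and $D$ together with the $\overline g$-self-adjointness of $\overline\varphi$ — is exactly the standard route the author intends; all the component identifications (including the $\rho(X)\xi$-part of $A_LX$ and the $\epsilon=1$ normalization) check out.
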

From now on in this paper,  $(\overline M,\overline \varphi,\overline \xi,\overline \eta,\overline g,
\overline {\widetilde g})$ is an $\F_0$-manifold and $(M,g)$, $(M,\widetilde g)$ are the submanifolds of $\overline M$ from Theorem \ref{Theorem 1.1}.
\section{Relations between the induced geometric objects on the submanifolds $(M,g)$ and $(M,\widetilde g)$ of $\F_0$-manifolds. \\
Proof of Theorem \ref{Theorem 1.2}}\label{sec-6}
Applying Proposition \ref{Proposition 5.1}, the  formulas \eqref{2.8}, \eqref{2.9} and \eqref{2.10} for the submanifold $(M,g)$ of $(\overline M,\overline g)\in \F_0$ become
\begin{align}\label{5.9}
\begin{aligned}
\overline \nabla _XY=\nabla _XY+B(X,Y)N+(1/\mu )B(X,\overline \varphi (PY))L , \\
\overline \nabla _XN=(1/2\mu ^2)A^*_\xi X-(X(\mu )/\mu )N, \quad \overline \nabla _XL=-(1/\mu )\overline \varphi (A^*_\xi X) .
\end{aligned}
\end{align}
On the other hand, for the Gauss-Weingarten formulas of a non-degenerate submanifold $(M,\widetilde g)$ of codimension two of a semi-Riemannian manifold $(\overline M,\overline {\widetilde g})$ we have 
\begin{align}\label{5.12}
\begin{aligned}
\overline {\widetilde \nabla }_XY=\widetilde \nabla _XY+h_1(X,Y)N_1+h_2(X,Y)N_2 , \\
\overline {\widetilde \nabla }_XN_1=-\widetilde A_{N_1}X+\alpha (X)N_2, \quad \overline {\widetilde \nabla }_XN_2=-\widetilde A_{N_2}X+\alpha (X)N_1 , 
\end{aligned}
\end{align}
$\forall X,Y\in \Gamma (TM)$ ,where: $\widetilde \nabla $ is the Levi-Civita connection of $\widetilde g$; $N_1$ and 
$N_2$ are normal vector fields to $(M,\widetilde g)$ satisfying \eqref{1.1};
$\widetilde h(X,Y)=h_1(X,Y)N_1+h_2(X,Y)N_2$ is the second fundamental form of $(M,\widetilde g)$; 
$\widetilde A_{N_1}$ and $\widetilde A_{N_2}$ are  the shape operators with respect to $N_1$ and 
$N_2$, respectively; $\alpha $ is a 1-form on $(M,\widetilde g)$. The bilinear  forms $h_1$, $h_2$,
given by $h_1(X,Y)=\widetilde g(\widetilde A_{N_1}X,Y)$, \, $h_2(X,Y)=-\widetilde g(\widetilde A_{N_2}X,Y)$, are symmetric. Therefore $\widetilde A_{N_1}$ and $\widetilde A_{N_2}$ are self-adjoint with respect to $\widetilde g$.
\par 
By using that $\overline \nabla =\overline {\widetilde \nabla }$ for an 
$\F_0$-manifold and \eqref{5.9}, \eqref{5.12} we obtain the following 
\begin{pro}\label{Proposition 6.1}
The induced geometric objects on the submanifolds $(M,g)$ and $(M,\widetilde g)$ of an $\F_0$-manifold $(\overline M,\overline \varphi,\overline \xi,\overline \eta,\overline g,\overline {\widetilde g})$ are related as follows:
\begin{equation*}
\widetilde \nabla _XY=\nabla _XY+(1/\mu ^2)\left((1/2)B(X,Y)+B(X,\overline \varphi (PY))\right)\xi ,
\end{equation*}
\begin{equation}\label{5.17}
h_1(X,Y)=(1/\mu )B(X,Y), \quad h_2(X,Y)=-(1/\mu )(B(X,Y)+B(X,\overline \varphi (PY))),
\end{equation}
\begin{equation*}
\widetilde A_{N_1}X=-(1/\mu )\overline \varphi (A^*_\xi X), \quad
\widetilde A_{N_2}X=(1/\mu )(A^*_\xi X-\overline \varphi (A^*_\xi X)), \quad \alpha (X)=0.
\end{equation*}
\end{pro}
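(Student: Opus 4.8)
The plan is to compare the two Gauss--Weingarten systems \eqref{5.9} and \eqref{5.12} term by term, the whole argument resting on the identity $\overline \nabla =\overline {\widetilde \nabla }$ valid for an $\F_0$-manifold. The first step is to re-express the transversal fields $N$ and $L$ of $(M,g)$ in the frame adapted to $(M,\widetilde g)$, namely in terms of the normal pair $\{N_1,N_2\}$ and the tangential field $\xi $. Using $N_1=\overline \xi -L$, $N_2=2\overline \xi -2\mu N-L$ and \eqref{4.3} one finds $N_1-N_2=-(1/2\mu )\xi +\mu N$ and $N_2=(1/\mu )\xi -L$, hence
\begin{equation*}
N=(1/\mu )(N_1-N_2)+(1/2\mu ^2)\xi ,\qquad L=(1/\mu )\xi -N_2 .
\end{equation*}

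Substituting these into the Gauss formula for $(M,g)$ in \eqref{5.9} and splitting the result into its component in $TM$ (recall $\xi \in \Gamma ({\rm Rad}(TM))\subset \Gamma (TM)$) and its component in $TM^{\widetilde \bot }={\rm span}\{N_1,N_2\}$, I would compare with the Gauss formula for $(M,\widetilde g)$ in \eqref{5.12}: the $TM$-part is $\widetilde \nabla _XY$, while the coefficients of $N_1$ and $N_2$ are $h_1(X,Y)$ and $h_2(X,Y)$. This immediately produces the first line of the proposition and \eqref{5.17}.

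For the Weingarten formulas I would differentiate $N_1=\overline \xi -L$ and $N_2=2\overline \xi -2\mu N-L$, using $\overline \nabla \overline \xi =0$ in the $\F_0$ case together with the expressions for $\overline \nabla _XN$ and $\overline \nabla _XL$ in \eqref{5.9}. In the computation for $N_2$ the two $X(\mu )N$ terms cancel, leaving $\overline \nabla _XN_1=(1/\mu )\overline \varphi (A^*_\xi X)$ and $\overline \nabla _XN_2=(1/\mu )\bigl(\overline \varphi (A^*_\xi X)-A^*_\xi X\bigr)$. Since $A^*_\xi $ is $\Gamma (S(TM))$-valued and $\overline \varphi (S(TM))=S(TM)$ by Proposition~\ref{Proposition 4.1}(i), both right-hand sides lie in $\Gamma (S(TM))\subset \Gamma (TM)$, i.e.\ they have no $N_1$- or $N_2$-component; comparing with \eqref{5.12} then forces $\alpha \equiv 0$ and identifies $\widetilde A_{N_1}X=-(1/\mu )\overline \varphi (A^*_\xi X)$ and $\widetilde A_{N_2}X=(1/\mu )(A^*_\xi X-\overline \varphi (A^*_\xi X))$.

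The only genuinely delicate point is the bookkeeping of the $X(\mu )$ contributions when differentiating $\mu N$ (and keeping the decomposition of $N$ and $L$ used consistently); the structural fact that makes everything collapse is that every transversal-looking term coming from the Weingarten derivatives of $N_1$ and $N_2$ is in fact tangent to $(M,\widetilde g)$, which is precisely where $\overline \varphi (S(TM))=S(TM)$ is used.
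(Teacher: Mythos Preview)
Your proposal is correct and follows exactly the approach the paper indicates: the paper's proof is the single line ``By using that $\overline \nabla =\overline {\widetilde \nabla }$ for an $\F_0$-manifold and \eqref{5.9}, \eqref{5.12} we obtain the following,'' and your write-up simply spells out this comparison in detail, including the change of transversal frame $N=(1/\mu )(N_1-N_2)+(1/2\mu ^2)\xi $, $L=(1/\mu )\xi -N_2$ and the cancellation of the $X(\mu )$ terms in the Weingarten computation for $N_2$.
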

It is known that a semi-Riemannian submanifold $(M,\widetilde g)$ is  minimal if 
${\rm trace}_{\widetilde g}\widetilde h=0$.
We recall that a lightlike submanifold 
$(M,g)$ is minimal \cite{B-D}  if $h^S=0$ on ${\rm Rad} (TM)$ and ${\rm trace}_{g_{|S(TM)}}h=0$.\\
{\bf Proof of Theorem \ref{Theorem 1.2}.} 
From \eqref{5.9} and \eqref{5.12}, \eqref{5.17} for the second fundamental forms $h$ and $\widetilde h$ of $(M,g)$ and $(M,\widetilde g)$, respectively, we have
\begin{equation}\label{5.23}
h(X,Y)=B(X,Y)N+(1/\mu )B(X,\overline \varphi (PY))L ,
\end{equation}
\begin{equation}\label{5.24}
\widetilde h(X,Y)=(-3/2\mu ^2)B(X,Y)\xi +B(X,Y)N+(1/\mu )B(X,\overline \varphi (PY))L 
\end{equation}
for any $X,Y\in \Gamma (TM)$. By using \eqref{5.23}, \eqref{5.24} and (iii) of Proposition
\ref{Proposition 5.1} we obtain
\begin{equation}\label{5.25}
h(\overline \varphi X,\overline \varphi Y)=-h(X,Y), \quad \widetilde h(\overline \varphi X,\overline \varphi Y)=-\widetilde h(X,Y)
\end{equation}
for any $X,Y\in \Gamma S(TM)$. Take the orthonormal basis \eqref{4.10} of $TM$, we find  
\[
{\rm trace}_{\widetilde g}\widetilde h=\widetilde h\left((1/\mu )\xi ,(1/\mu )\xi\right)+\sum_{i=1}^{n-1}(\widetilde h(e_i,e_i)-
\widetilde h(\overline \varphi e_i,\overline \varphi e_i)) .
\]
The second equality in \eqref{5.25} and $\widetilde h(\xi ,\xi)=0$ imply ${\rm trace}_{\widetilde g}\widetilde h=2\sum_{i=1}^{n-1}\widetilde h(e_i,e_i)$. From \eqref{5.24} and \eqref{2.6} it follows that ${\rm trace}_{\widetilde g}\widetilde h=0$ if and only if
\begin{equation}\label{5.26} 
\sum_{i=1}^{n-1}B(e_i,e_i)=0, \qquad \sum_{i=1}^{n-1}B(e_i,\overline \varphi e_i)=0.
\end{equation}
Thus, we established that (i) is equivalent to the conditions  \eqref{5.26}. 
Now we will show that (ii) is also equivalent to \eqref{5.26}. Since $h^S(\xi ,\xi )=D(\xi ,\xi )=0$, (ii) is equivalent to the condition ${\rm trace}_{g_{|S(TM)}}h=0$. The following system of vector fields 
\[
\left\{(e_1+\overline \varphi e_1)/\sqrt{2},\ldots ,(e_{n-1}+\overline \varphi e_{n-1})/\sqrt{2},(\overline \varphi e_1-e_1)/\sqrt{2}\ldots ,
(\overline \varphi e_{n-1}-e_{n-1})/\sqrt{2}\right\}
\]
is an orthonormal basis of $S(TM)$ with respect to $g$, where $\{e_i,\overline \varphi e_i\}$ 
$(i=1,\ldots ,n-1)$ are the vector fields from \eqref{4.10}. With the help of the above basis and \eqref{5.23} we get
\begin{equation*}
\begin{array}{lll}
{\rm trace}_{g_{|S(TM)}}h
=2\displaystyle\sum_{i=1}^{n-1}h(e_i,\overline \varphi e_i)
=2\displaystyle\sum_{i=1}^{n-1}\left(B(e_i,\overline \varphi e_i)N-(1/\mu )B(e_i,e_i)L\right).
\end{array}
\end{equation*}
The above equality and \eqref{2.6} imply that ${\rm trace}_{g_{|S(TM)}}h=0$ if and only if the conditions \eqref{5.26} hold. Thus, we complete the proof.
\section{A Lie subgroup as a minimal ascreen RSTHL submanifold and as a minimal semi-Riemannian submanifold of a 7-dimensional Lie group as an $\F_0$-manifold}\label{sec-7}
Let $\overline G$ be a 7-dimensional real connected Lie group and let $\overline {\g}$ be its Lie algebra. If 
$\{e_i\}$ $(i=1,\ldots ,7)$ is a global basis of left invariant vector fields of $\overline G$, we define an almost contact structure $(\overline \varphi,\overline \xi,\overline \eta)$ and a left invariant B-metric 
$\overline g$ on $\overline G$ as follows:
\begin{align}\label{7.1}
\begin{aligned}\overline \varphi e_i=e_{i+3}, \, \overline \varphi e_{i+3}=-e_i\,  (i=1,2,3), \,
\overline \varphi e_7=0; \, \overline \xi =e_7; \, \overline \eta (e_7)=1;\\
\overline \eta (e_i)=0\,  (i=1,2,3,4,5,6), \, \overline g(e_i,e_j)=0, i\neq j, \, i,j\in \{1,2,3,4,5,6,7\},\\
\overline g(e_i,e_i)=\overline g(e_7,e_7)=-\overline g(e_{i+3},e_{i+3})=1 \, \, (i=1,2,3).
\end{aligned}
\end{align}
Thus, $(\overline G,\overline \varphi,\overline \xi,\overline \eta ,\overline g)$ is a 7-dimensional almost contact B-metric manifold with  an orthonormal basis  $\{\overline \xi ,e_1,e_2,e_3,\overline \varphi e_1,\overline \varphi e_2,\overline \varphi e_3\}$ of  $\overline {\g}$. Let the Lie algebra $\overline {\g}$ of $\overline G$ be determined by the following non-zero commutators:
\begin{align}\label{7.2}
\begin{aligned}
\left[e_1,e_2\right]=-[\overline \varphi e_1,\overline \varphi e_2]=\lambda _1e_2+\lambda _2e_3+\lambda _3\overline \varphi e_2+\lambda _4\overline \varphi e_3, \\
[e_1,e_3]=-[\overline \varphi e_1,\overline \varphi e_3]=\lambda _5e_2-\lambda _1e_3+\lambda _6\overline \varphi e_2-\lambda _3\overline \varphi e_3, \\
[e_2,\overline \varphi e_1]=[\overline \varphi e_2,e_1]=\lambda _3e_2+\lambda _4e_3-\lambda _1\overline \varphi e_2-\lambda _2\overline \varphi e_3, \\
[e_3,\overline \varphi e_1]=[\overline \varphi e_3,e_1]=\lambda _6e_2-\lambda _3e_3-\lambda _5\overline \varphi e_2+\lambda _1\overline \varphi e_3,
\end{aligned}
\end{align}
where $\lambda _i\in {\R}$ $(i=1,2,3,4,5,6)$. From \eqref{7.2} it is clear that  any vector fields $X, Y$ of $\overline G$ satisfy the condition $[\overline \varphi X,\overline \varphi Y]=-[X,Y]$, i.e. the almost contact structure $(\overline \varphi ,\overline \xi ,\overline \eta )$ on $\overline G$ is {\it non-Abelian}
\cite{MM}. It is known \cite{MM} that a Lie group $(\overline G,\overline \varphi ,\overline \xi  ,\overline \eta ,\overline g)$ which is an almost contact B-metric manifold with a non-Abelian almost contact structure is an $\F_0$-manifold} iff $[X,Y]=-\overline \varphi [\overline \varphi X,Y]$. Directly we check that the commutators in \eqref{7.2} satisfy the latter condition. Hence, $(\overline G,\overline \varphi,\overline \xi,\overline \eta ,\overline g)$ belongs to the class $\F_0$.
\par 
Let us consider the subspace ${\g}$ of $\overline {\g}$ spanned by $\{e_2,e_3,\overline \varphi e_2,\overline \varphi e_3,\xi =-\mu \overline \varphi e_1+\mu \overline \xi \}$, $\mu \in{\R}, \mu \neq 0$. By using \eqref{7.2} we check that ${\g}$ is a Lie subalgebra of $\overline {\g}$. Hence, the corresponding  to ${\g}$ Lie subgroup $G$ of $\overline G$ is a 5-dimensional submanifold of
$\overline G$. The induced metric $g$ on $G$ by $\overline g$ is degenerate and ${\rm Rad} ({\g})=
{\rm span} \{\xi \}$, which means that $(G,g)$ is a half lightlike submanifold of $(\overline G,\overline g)$. We take 
the screen distribution $S({\g})$, the lightlike transversal bundle ${\rm ltr} ({\g})$ and the screen transversal bundle $S({\g}^\bot )$ of $(G,g)$ as follows: $S({\g})={\rm span}\{e_2,e_3,\overline \varphi e_2,\overline \varphi e_3\}$, ${\rm ltr} ({\g})={\rm span}\{N=(1/2\mu )(\overline \varphi e_1+\overline \xi )\}$, $S({\g}^\bot )={\rm span}\{L=e_1\}$. Since $\overline \varphi \xi =\mu L$ and $\overline \xi =(1/2\mu )\xi +\mu N$, it follows that $(G,g)$ is an ascreen RSTHL submanifold.
\par
If $\widetilde g$ is the induced metric on $G$ by the associated metric $\overline {\widetilde g}$, the determinant of the matrix of $\widetilde g$ with respect to the basis $\{e_2,e_3,\overline \varphi e_2,\overline \varphi e_3,\xi \}$ of ${\g}$ is $\mu ^2\neq 0$. Hence, $(G,\widetilde g)$ is a non-degenerate submanifold of $(\overline G,\overline {\widetilde g})$. It is easy to see that the normal bundle of $(G,\widetilde g)$ is spanned by $N_1=\overline \xi -e_1$, $N_2=\overline \xi -e_1-\overline \varphi e_1$ and they satisfy \eqref{1.1}.
\par
First, by standard calculations we find the following components of the Levi-Civita connection $\overline \nabla$ of  
$\overline g$:
$\overline \nabla _{e_2}e_2=-\overline \nabla _{\overline \varphi e_2}
\overline \varphi e_2=-\overline \nabla _{e_3}e_3=\overline \nabla _{\overline \varphi e_3}
\overline \varphi e_3=\lambda _1e_1+\lambda _3\overline \varphi e_1$ and 
$\overline \nabla _\xi \xi  =0$. Then we obtain $h(e_2,e_2)=-h(\overline \varphi e_2,\overline \varphi e_2)=-h(e_3,e_3)=h(\overline \varphi e_3,\overline \varphi e_3)=\mu \lambda _3N+\lambda _1L$ and $h(\xi ,\xi )=0$. Since $\{e_2,e_3,\overline \varphi e_2,\overline \varphi e_3\}$ is an orthonormal basis of $S({\g})$ and the signature of $g_{|S(TM)}$ is $(+,+,-,-)$, we have ${\rm trace}_{g_{|S(TM)}}h=h(e_2,e_2)+h(e_3,e_3)-h(\overline \varphi e_2,\overline \varphi e_2)-h(\overline \varphi e_3,\overline \varphi e_3)=0$. Therefore $(G,g)$ is minimal.
\par
The basis $\{(1/\mu )\xi ,(e_2-\overline \varphi e_2)/\sqrt{2},(e_3-\overline \varphi e_3)/\sqrt{2},
(e_2+\overline \varphi e_2)/\sqrt{2},(e_3+\overline \varphi e_3)/\sqrt{2}\}$ of ${\g}$ is orthonormal with respect to $\widetilde g$ and the signature of $\widetilde g$ is $(+,+,+,-,-)$. By using this basis we get ${\rm trace}_{\widetilde g}\widetilde h=\widetilde h((1/\mu )\xi ,(1/\mu )\xi )-2(\widetilde h(e_2,\overline \varphi e_2)+\widetilde h(e_3,\overline \varphi e_3))$. From the components $\overline {\widetilde \nabla }_{e_2}
\overline \varphi e_2=-\overline {\widetilde \nabla }_{e_3}
\overline \varphi e_3=-\lambda _3e_1+\lambda _1\overline \varphi e_1$ and $\overline {\widetilde \nabla }_\xi \xi =0$ of the Levi-Civita connection 
$\overline {\widetilde \nabla }$ of  $\overline {\widetilde g}$ we derive $\widetilde h(e_2,\overline \varphi e_2)=-\widetilde h(e_3,\overline \varphi e_3)=\lambda _1N_1+(\lambda _3-\lambda _1)N_2$ and $\widetilde h((1/\mu )\xi ,(1/\mu )\xi )=0$. Thus, we obtain ${\rm trace}_{\widetilde g}\widetilde h=0$, i.e. $(G,\widetilde g)$ is minimal.

\end{document}